\title{Cartan geometries on complex manifolds of algebraic dimension zero}
\author{Indranil Biswas, Sorin Dumitrescu and Benjamin McKay}
\institution{School of Mathematics, Tata Institute of Fundamental
Research, Homi Bhabha Road, Mumbai 400005, India}\\
\email{indranil@math.tifr.res.in}} \\
\institution{Universit\'e C\^ote d'Azur, CNRS, LJAD}\\
\email{dumitres@univ-cotedazur.fr}}\\
\institution{University College Cork, Cork, Ireland}\\
\email{b.mckay@ucc.ie}}
\date{\vspace{-5ex}} 
\journal{\'Epijournal de G\'eom\'etrie Alg\'ebrique} 
\newtheorem{theorem}{Theorem}[section]
\newtheorem{lemma}[theorem]{Lemma}
\newtheorem{definition}[theorem]{Definition}
\numberwithin{equation}{section}
\renewcommand{\p@equation}{\arabic{section}.\arabic{equation}\expandafter\@gobble}
\begin{document}


\maketitle



\begin{prelims}

\vspace{-0.55cm}

\def\abstractname{Abstract}
\abstract{We prove that every compact complex manifold of algebraic dimension zero bearing a holomorphic Cartan geometry of algebraic type 
must have infinite fundamental group. This generalizes the main Theorem in \cite{DM} where the same result was proved for the
special cases of holomorphic affine connections and holomorphic conformal structures.}

\keywords{Cartan geometry, almost homogeneous space, algebraic dimension, Killing vector field, semistability}

\MSCclass{53C56, 14J60, 32L05}


\languagesection{Fran\c{c}ais}{%

\vspace{-0.05cm}
{\bf Titre. G\'eom\'etries de Cartan sur les vari\'et\'es complexes de dimension alg\'ebrique nulle} \commentskip {\bf R\'esum\'e.} Nous montrons que toute vari\'et\'e complexe compacte de dimension alg\'ebrique nulle poss\'edant une g\'eom\'etrie de Cartan holomorphe de type alg\'ebrique doit avoir un groupe fondamental infini. Il s'agit d'une g\'en\'eralisation du th\'eor\`eme principal de \cite{DM} o\`u le m\^eme r\'esultat \'etait d\'emontr\'e dans le cas particulier des connexions affines holomorphes et des structures conformes holomorphes.}

\end{prelims}


\newpage

\setcounter{tocdepth}{1} \tableofcontents

\section{Introduction}

Let $G$ be connected complex Lie group and $H\, \subset\, G$ a closed complex Lie subgroup. A holomorphic Cartan geometry of 
type $(G,\, H)$ on a complex manifold $X$ is an infinitesimal structure on $X$ modeled on $G/H$ (details are in Section 
\ref{section: geometric structures}). Holomorphic affine connections, holomorphic projective connections and holomorphic 
conformal structures are among the important geometric examples of holomorphic Cartan geometries \cite{Sh}.

Contrary to the real setting, compact complex manifolds admitting a holomorphic Cartan geometry are
rather rare. All known 
examples of compact complex manifolds admitting a holomorphic Cartan geometry actually admit a {\it flat} holomorphic Cartan 
geometry. It should be clarified that the model of the flat Cartan geometry (see Definition \ref {cartan geom 
def}) could be different from the
given one. Indeed, compact complex parallelizable manifolds \cite{Wa} biholomorphic to quotients $G / 
\Gamma$, where $G$ is a complex semi-simple Lie group and $\Gamma \,\subset\, G$ a uniform lattice, are known to admit holomorphic 
affine connections, but no flat holomorphic affine connections \cite{Du2}.

Nevertheless, the following was conjectured in \cite{DM}: {\it Compact complex simply connected manifolds admitting a holomorphic 
Cartan geometry with model $(G,\,H)$ are biholomorphic to the model $G/H$.} (See
\cite[Section 6]{DM}.)

This question is open even for the very special case of holomorphic projective connections: accordingly to this conjecture, 
complex compact simply connected manifolds bearing a holomorphic projective connection should be complex projective spaces 
(endowed with the standard flat projective geometry); this is not known even for smooth complex projective varieties (except for 
the complex dimensions one, two \cite{KO, KO1} and three \cite{PR}). A special case of this conjecture is that compact complex 
simply connected manifolds $X$ do not admit any holomorphic affine connection. This is known to be true for K\"ahler manifolds; 
indeed, in this case the rational Chern classes of the holomorphic tangent bundle $TX$ must vanish, \cite{At}, and using Yau's 
theorem proving Calabi's conjecture it can be shown that $X$ admits a finite unramified cover which is a complex torus 
\cite{IKO}. According to the conjecture, the only simply connected compact complex manifold which admits a holomorphic conformal 
structure is the smooth nondegenerate quadric hypersurface in
complex projective space. However, this is not known even for smooth complex
projective varieties (except for the complex dimensions one, two \cite{KO2} and three \cite{PR1}). It is known that the
smooth quadric 
hypersurface admits precisely one holomorphic conformal structure (the standard one) \cite[Corollary 3]{BMc}.

Examples of non--K\"ahler complex compact manifolds of algebraic dimension zero admitting holomorphic Riemannian metrics 
\cite{Le, Gh} of constant sectional curvature
were constructed by Ghys in \cite{Gh}. If the sectional curvatures of a holomorphic Riemannian metric
are constant, then the associated Levi-Civita holomorphic affine connection is projectively flat.
They also admit the corresponding flat holomorphic conformal 
structure. The examples in \cite{Gh} are deformations of parallelizable manifolds covered by ${\rm SL}(2, \mathbb C)$.

In the direction of the conjecture, the following was proved in \cite[Theorem 1]{DM}:

{\it Compact complex simply connected manifolds with algebraic dimension zero admit neither holomorphic affine connections, nor 
holomorphic conformal structures.}

Generalizing the main theorem in \cite{DM} (Theorem 1), here we prove the following:

{\it Compact complex manifolds of algebraic dimension zero bearing a holomorphic Cartan geometry of algebraic type have infinite 
fundamental group} (Theorem \ref{main thm}).

\section{Cartan geometries}\label{section: geometric structures}

Let $X$ be a complex manifold, $G$ a complex connected Lie group and $H$ a closed complex Lie subgroup of $G$. The 
Lie algebras of the Lie groups $G$ and $H$ will be denoted by $\mathfrak{g}$ and $\mathfrak h$ respectively.

\begin{definition}\label{cartan geom def}
A {\it holomorphic Cartan geometry} $(P,\, \omega)$ on $X$ with model $(G,\,H)$ is a 
holomorphic principal (right) $H$-bundle
\begin{equation}\label{pi}
\pi\,:\, P \,\longrightarrow\, X
\end{equation}
endowed with a holomorphic $\mathfrak g$-valued
1-form $\omega$ satisfying the following three conditions:
\begin{enumerate}
\item $\omega_{p} \,:\, T_{p}P \,\longrightarrow\, \mathfrak g$ is a complex linear isomorphism for all $p \,\in\, P$,
 
\item the restriction of $\omega$ to every fiber of $\pi$ coincides with the left invariant Maurer-Cartan form of $H$, and
 
\item $(R_{h})^* \omega \,=\,{\rm Ad}(h)^{-1} \omega$, for all $h \,\in \,H$, where $R_{h}$ is the right action of $h$ on $P$
and ${\rm Ad}$ is the adjoint representation of $G$ on $\mathfrak g$.
\end{enumerate}
\end{definition}

\begin{definition}
The \emph{kernel} \(N\) of a model \((G,\,H)\) is the largest subgroup of \(H\) normal in \(G\); the kernel is clearly
a closed complex normal subgroup, and we denote its Lie algebra by \(\mathfrak{n}\).
A model \((G,\,H)\) is \emph{effective} if its kernel is \(N\,=\,\left\{1\right\}\). In other words,
\((G,\,H)\) is effective if $H$ does not contain any nontrivial subgroup normal in $G$.
A Cartan geometry is \emph{effective} if its model is effective.
Any Cartan geometry \((P,\,\omega)\) has an \emph{induced effective} Cartan geometry
\((\overline{P},\,\overline{\omega})\) where \(\overline{P}\,=\,P/N\) and \(\overline{\omega}\)
is the unique \(\mathfrak{g}/\mathfrak{n}\)-valued 1-form on \(\overline{P}\) which pulls back,
via the natural quotient map \(P\,\longrightarrow\, \overline{P}\),
to the $\mathfrak{g}/\mathfrak{n}$-valued one-form given by \(\omega\).
\end{definition}

\begin{definition}
If $\text{Ad}(H/N)\, \subset\, \text{GL}({\mathfrak g}/\mathfrak{n})$ is an algebraic subgroup, then
the holomorphic Cartan geometries with model $(G,\,H)$ are said to be of {\it algebraic type}.
\end{definition}
\hspace{1,4em}Holomorphic affine connections, holomorphic projective connections, and holomorphic conformal structures are among the examples of effective holomorphic Cartan geometries of algebraic type \cite{Pe,Sh}. 

Notice that Definition \ref{cartan geom def} implies, in particular, that the complex dimension of the homogeneous model
space $G/H$ coincides with the complex dimension $n$ of the manifold $X$.

Let us give some important examples.

A holomorphic affine connection corresponds to the model (${\mathbb C}^n\rtimes\text{GL}(n, 
{\mathbb C}),\, \text{GL}(n, {\mathbb C})$) of the complex affine space acted on by the group 
${\mathbb C}^n \rtimes\text{GL}(n, {\mathbb C})$ of complex affine transformations.

A holomorphic projective connection is a Cartan geometry with model $(\text{PGL}(n+1,{\mathbb C}), 
\,Q)$, where $\text{PGL}(n+1,{\mathbb C})$ is the complex projective group acting on the complex 
projective space ${\mathbb C}P^n$ and where $Q\, \subset\, \text{PGL}(n+1,{\mathbb C})$ is the maximal 
parabolic subgroup that fixes a given point in ${\mathbb C}P^n$.

Holomorphic conformal structures are modeled on the quadric \(z_0^2+ z_1 ^2 + \ldots 
+z_{n+1}^2\,=\, 0\) in ${\mathbb C}P^{n+1}$. Here $G$ is the subgroup ${\rm PO}(n+2, \mathbb{C})$ of 
the complex projective group $\text{PGL}(n+2,{\mathbb C})$ preserving the above quadric, while $H$ is the 
stabilizer of a given point in the quadric. For more details about those Cartan geometries and for 
many other examples the reader may consult \cite{Sh}.

\begin{definition} 
Let $X$ be equipped with a holomorphic Cartan geometry $(P,\, \omega)$ of type $(G,\, H)$.
A (local) biholomorphism between two open subsets $U$ and $V$ in $X$ is a (local) {\it automorphism} of
the Cartan geometry $(P,\, 
\omega)$ if it lifts to a holomorphic principal $H$-bundle isomorphism between $\pi^{-1}(U)$ and $\pi^{-1}(V)$
that preserves the form $\omega$; here $\pi$ is the projection in \eqref{pi}.

A (local) holomorphic vector field on $X$ is a (local) {\it Killing field} of $(P,\, \omega)$ if its (local) flow acts by (local) 
automorphisms of $(P,\, \omega)$.
 
The Cartan geometry $(P,\, \omega)$ is called {\it locally homogeneous} on an open subset $U \,\subset\, X$ if the tangent
space $T_uX$ is spanned by local Killing fields of $(P,\, \omega)$ for every point $u \,\in\, U$.
\end{definition}

The curvature of $(P,\, \omega)$ is defined as a $2$-form $\Omega$ on $P$, with values in $\mathfrak{g}$, given by the 
following formula
\begin{equation}\label{om2}
\Omega(Y,\,Z)\,=\,d \omega (Y,\,Z)- \lbrack \omega (Y), \,\omega (Z) \rbrack _{\mathfrak{g}}
\end{equation}
for all 
local holomorphic vector fields $Y,\, Z$ on $P$. When at least one of $Y$ and $Z$ is a vertical vector field
(meaning lies in the kernel of the differential $d\pi$ of $\pi$ in \eqref{pi}), then $\Omega(Y,\,Z)\,=\, 0$ (see \cite[5.3.10]{Sh}). From this it follows that $\Omega$ is basic and it descends on $X$.

More precisely we have the following classical interpretation of $\Omega$. Let
\begin{equation}\label{e2}
P_G\, :=\, P \times^H G \, \stackrel{\pi_G}{\longrightarrow}\, X
\end{equation}
be the holomorphic principal $G$--bundle over $X$ obtained by extending the
structure group of the holomorphic principal $H$--bundle
$P$ using the inclusion of $H$ in $G$. Recall that $P_G$ is the
quotient of $P \times G$ where two points $(p_1,\, g_1),\, (p_2,\, g_2)\, \in\,
P\times G$ are identified if there is an element $h\, \in\, H$ such that
$p_2\,=\, p_1h$ and $g_2\,=\, h^{-1}g_1$. The projection $\pi_G$ in \eqref{e2} is induced
by the map $P \times G\, \longrightarrow\, X$, $(p,\, g)\,\longmapsto\, \pi (p)$.
The action of $G$ on $P_G$ is induced by the
action of $G$ on $P \times G$ given by the right--translation action of $G$ on itself.

Consider the adjoint vector bundle ${\rm ad}(P_G)\,=\, P_G\times^G {\mathfrak g}$ of 
$P_G$. We recall that ${\rm ad}(P_G)$ is the quotient of $P_G\times
\mathfrak g$ where two points $(c_1,\, v_1)$ and $(c_2,\, v_2)$ are identified if there
is an element $g\, \in\, G$ such that
$c_2\, =\, c_1g$ and $v_2$ is the image of $v_1$ through the automorphism of the Lie algebra
$\mathfrak g$ defined by automorphism of $G$ given by $z \,\longmapsto\, g^{-1}zg$.

Starting with $(P,\, \omega)$ there is a natural Ehresmann connection on the principal bundle $P_G$ given by the following
$\mathfrak g$-valued form on $P \times G$:

$$\widetilde{\omega}(p,g)\,=\, Ad(g^{-1})\pi_1^*(\omega) + \pi_2^* (\omega_G)\, ,$$
where $\pi_1$ and $\pi_2$ are the projections of $P \times G$
on the first and the second factor respectively, while $\omega_G$ is the left-invariant Maurer-Cartan form on $G$.

The above form $\widetilde{\omega}$ is invariant by the previous $H$-action on $P \times G$ and it vanishes
when restricted to the fibers of the fibration $P \times G \,\longrightarrow\, P_G$.
This implies that $\widetilde{\omega}$ is basic and it descends on $P_G$. Let us denote also by $\widetilde{\omega}$
this one-form on $P_G$ with values in $\mathfrak g$: it is an Ehresmann
connection on the principal bundle $P_G$ (see \cite{BD}, Proposition 3.4).

Recall that the curvature of the connection $\widetilde{\omega}$ on the principal bundle $P_G$ is a tensor of the following type:
\begin{equation}\label{curv}
\text{Curv}(\widetilde{\omega})\, \in\, H^0(X,\, \text{ad}(P_G)\otimes \Omega^2_X)\, .
\end{equation}

Moreover, the curvature $\Omega$ of $\omega$ in \eqref{om2}
vanishes if and only if $\text{Curv}(\widetilde{\omega})$ vanishes.
If $\Omega\,=\, 0$, then $\omega$ produces local isomorphisms of $X$ to the homogeneous space $G/H$. This way we get
a developing map from the universal cover of $X$ to $G/H$, which is a local biholomorphism (see for example \cite{Sh}
or \cite[p.~9]{BD}).

\section{Cartan geometries on manifolds of algebraic dimension zero}\label{simply connected}

As a preparation for the proof of Theorem \ref{main thm}, we first prove the following Lemma 
\ref{symetry} which is an adaptation, for Cartan geometries, of \cite[Proposition 2]{DM} on rigid geometric structures.

Recall that a compact complex manifold is of algebraic dimension zero if it does not admit any nonconstant meromorphic 
function \cite{Ue}. These manifolds are far from being algebraic. Indeed, a manifold is bimeromorphic with an algebraic 
manifold if and only if its field of meromorphic functions separates points \cite{Mo, Ue}.

\begin{lemma}\label{symetry} Let $X$ be a compact complex simply connected manifold of 
algebraic dimension zero endowed with a holomorphic effective Cartan geometry $(P,\, \omega)$. 
Then the connected component of the automorphism group of the identity of $(P, \,\omega)$ is a connected complex 
abelian Lie group $L$ acting on $X$ with an open dense orbit. The open dense orbit is the 
complement of an anticanonical divisor. Moreover, the natural map from
$L$ to this open dense orbit, constructed by fixing a point of this orbit, is a biholomorphism.
Furthermore, $L$ is covered by $({\mathbb C}^*)^{n}$.
\end{lemma}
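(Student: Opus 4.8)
The plan is to transpose the argument of \cite[Proposition~2]{DM} from rigid geometric structures to the Cartan-geometric setting, invoking the hypothesis $a(X)=0$ at the two places where it is decisive: to force local homogeneity, and to pin down the structure of the symmetry group. First I would record that $\omega$ trivialises $TP$, so it is an absolute parallelism on $P$; an automorphism of $(P,\,\omega)$ is a biholomorphism of $P$ preserving this framing and commuting with the $H$-action. By the classical rigidity of absolute parallelisms, such automorphisms form a finite-dimensional complex Lie group, and one fixing a single point of $P$ is the identity on the identity component. I denote this identity component by $L$; its Lie algebra $\mathfrak{l}$ is the space of global Killing fields of $(P,\,\omega)$, and $L$ acts freely on $P$.

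The crux, and the step I expect to be the main obstacle, is to deduce \emph{local homogeneity on a dense open set from} $a(X)=0$. A holomorphic Cartan geometry is a rigid holomorphic geometric structure, so Gromov's theory applies: on a dense open set the sheaf of local Killing fields has locally constant rank and its orbits define a foliation, the obstruction to whose leaves being open is the existence of nonconstant local holomorphic invariants assembled from the curvature $\Omega$ and its covariant derivatives (cf.\ \eqref{curv}). Because these invariants are $H$-equivariant and holomorphic, they descend to meromorphic functions on $X$; as $a(X)=0$ forces every meromorphic function to be constant, the invariants are locally constant and the leaves are open. Hence $(P,\,\omega)$ is locally homogeneous on a dense open set $\Omega_0\subseteq X$. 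Controlling Gromov's stratification and globalising the invariants as honest meromorphic functions on $X$ is precisely the delicate adaptation of \cite[Proposition~2]{DM}.

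Next I would globalise and produce the open orbit. Since $X$ is simply connected and the geometry is holomorphic, hence real-analytic and rigid, every local Killing field extends to a global one by the Amores--Nomizu continuation theorem; here simple connectivity is essential, to kill the monodromy of the continuation. Thus $\mathfrak{l}=\mathrm{Lie}(L)$ already spans $T_xX$ for $x\in\Omega_0$, so $L$ acts on $X$ with open dense orbit $\Omega_0$. For $x_0\in\Omega_0$ the stabiliser embeds into $H$ (as $L$ acts freely on $P$, an element fixing a lift $p_0$ of $x_0$ is the identity) and acts on $T_{x_0}X\cong\mathfrak{g}/\mathfrak{h}$ through $\mathrm{Ad}$; I would use the theory of almost homogeneous manifolds of algebraic dimension zero, again through the absence of nonconstant meromorphic functions, to show this stabiliser is trivial. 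Then $\dim L=n$, the action on $\Omega_0$ is simply transitive, and the map $g\mapsto g\cdot x_0$ is a biholomorphism of $L$ onto $\Omega_0$.

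Finally I would identify the divisor and the group. A basis $V_1,\dots,V_n$ of $\mathfrak{l}$ frames $T_xX$ at every point of $\Omega_0$ by simple transitivity, so $s:=V_1\wedge\cdots\wedge V_n\in H^0(X,\,K_X^{-1})$ is a holomorphic section of the anticanonical bundle whose zero divisor is exactly $X\setminus\Omega_0$; this realises the open orbit as the complement of an anticanonical divisor. The fields $V_i$ are then the translation-invariant fields on $L$ and $s$ the translation-invariant section, and the same almost-homogeneous analysis combined with $a(X)=0$ forces $L$ to be abelian with neither additive $\mathbb{C}$-factors nor compact subtori, whence $L$ is covered by $(\mathbb{C}^*)^{n}$. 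The two points I expect to require the most care beyond Step~2 are the triviality of the isotropy and the exclusion of these forbidden abelian factors, both of which I would reduce to the vanishing of the algebraic dimension.
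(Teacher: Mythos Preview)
Your outline matches the paper's through the first steps: local homogeneity on a dense open set (the paper simply invokes \cite[Theorem~1.2]{Du} for this, rather than rederiving the Gromov argument), globalisation of local Killing fields by Amores--Nomizu under simple connectivity, and the open dense $L$-orbit whose complement is the zero locus of $X_1\wedge\cdots\wedge X_n\in H^0(X,K_X^{-1})$.

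The gap is in how you obtain abelianity and trivial isotropy. You propose to establish trivial isotropy first, via an unspecified appeal to ``the theory of almost homogeneous manifolds of algebraic dimension zero,'' and then abelianity by a similar appeal. The paper runs these in the opposite order, and abelianity comes from a concrete \emph{centralizer trick} that your sketch does not contain: one adjoins the global Killing fields $X_1,\dots,X_k$ to the Cartan geometry as extra data, forming a generalized Cartan geometry $(P,\omega,\mathfrak{a})$ in the sense of \cite{Pe}, and applies \cite[Theorem~1.2]{Du} a \emph{second} time to this enriched structure. Its Killing fields must commute with every $X_i$, so the resulting algebra $\mathfrak{a}'$ lies in the centralizer of $\mathfrak{a}$ inside $\mathfrak{a}$ (hence is abelian) and is again transitive on a dense open set; a short argument then gives $\mathfrak{a}=\mathfrak{a}'$, so $\mathfrak{a}$ is abelian of dimension $n$. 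Trivial isotropy is then immediate from commutativity: an element fixing one point of the open orbit fixes the whole orbit, hence all of $X$, hence is the identity because the geometry is effective. For the final claim that $L$ is covered by $(\mathbb{C}^*)^n$, the paper does not abstractly exclude $\mathbb{C}$-factors and tori; it constructs the $L$-invariant closed meromorphic $1$-forms $\omega_i$ dual to the $X_i$ and uses $a(X)=0$ together with simple connectivity to show that the period pairing $H_1(L,\mathbb{Z})\times\mathfrak{a}^*\to\mathbb{C}$, $(\gamma,\omega)\mapsto\int_\gamma\omega$, is nondegenerate, so that $\Delta=H_1(L,\mathbb{Z})$ spans $\mathfrak{a}$ over $\mathbb{C}$.
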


\begin{proof}
By Theorem 1.2 in \cite{Du} (see also \cite{D1}) the Cartan geometry $(P,\, \omega)$ is locally 
homogeneous on an open dense subset in $X$. Recall that on simply connected manifolds, local Killing 
vector fields of Cartan geometries extend to all of the manifold $X$ \cite{Am, No, Gr, Pe}.
Therefore, it 
follows that there exists a Lie algebra $\mathfrak{a}$ of globally defined Killing vector fields 
on $X$ which span $TX$ at the generic point. Let us fix a basis $\{X_1,\, \cdots,\, X_k \}$ 
of $\mathfrak{a}$ and consider the generalized Cartan geometry $(P, \omega, \mathfrak{a})$ (in the 
sense of Definition 4.11 in \cite{Pe}) which is a juxtaposition of $(P, \omega)$ with the family 
of vector fields $\{X_1,\, \cdots,\, X_k \}$. The proof of Theorem 1.2 in \cite{Du} applies to the 
generalized geometry $(P, \,\omega,\, \mathfrak{a})$ which must also be locally homogeneous on an open 
dense set. Notice that local Killing vector fields of $(P, \omega, \mathfrak{a})$ are restrictions 
of elements in $\mathfrak{a}$ which commute with the vector fields $X_i$. It follows that local 
vector fields of $(P,\, \omega,\, \mathfrak{a})$ are elements in the centralizer $\mathfrak{a'}$ 
of $\mathfrak{a}$, and $\mathfrak{a'}$ is transitive on an open dense subset. Since $\mathfrak{a'}$ is 
commutative, the action is simply transitive on the open dense subset (note that $\mathfrak{a'}$ has the 
same dimension as that of $X$). A basis of $\mathfrak{a'}$ is a family of commuting vector fields spanning 
$TX$ at the generic point. Since the vector fields $X_i$ commute with elements in this basis, they 
must be a linear combination of elements in $\mathfrak{a'}$ with constant coefficients (on the 
open dense subset and hence on all of $X$). In particular, we have $X_i \,\in\, \mathfrak{a'}$, which implies 
that $\mathfrak{a}\,=\,\mathfrak{a'}$ is an abelian algebra; this
also implies that $k$ is the complex dimension of 
$X$. In particular, the corresponding connected Lie group $L$ (the connected component of the 
identity of the automorphism group of $(P,\, \omega)$) is an abelian group of the same dimension 
as that of $X$ acting with an open dense orbit. This open dense orbit is the complement of the vanishing set $S$ of 
the holomorphic section $X_1 \bigwedge \cdots \bigwedge X_k$ of the anticanonical bundle. 

Since $L$ is abelian, any \(\ell_0 \,\in\, L\) stabilizing some point \(x_0\) of the open 
orbit stabilizes all points \(\ell x_0\) for \(\ell \,\in\, L\): \(\ell_0 x_0\,=\,x_0\), so \(\ell_0 
\ell x_0 \,= \,\ell \ell_0 x_0\,=\,\ell x_0\). By density of the open dense orbit, \(\ell_0\) 
stabilizes all points of \(X\). So the stabilizer of any point in the open orbit is trivial, 
i.e., \(X\) is an \(L\)-equivariant compactification of \(L\).

The following arguments which prove that
$L$ is covered by $({\mathbb C}^*)^{n}$ are borrowed from \cite{DM} (Proposition 2). Let us define nontrivial meromorphic 1-forms \(\omega_i\) on \(X\) by
$$
\omega_i(\xi) \,=\,
\frac{X_1 \wedge X_2 \wedge \cdots \wedge X_{i-1} \wedge \xi \wedge X_{i+1} \wedge \cdots \wedge X_n}
{X_1 \wedge X_2 \wedge \cdots \wedge X_n}.
$$

Notice that the previous meromorphic 1-forms $\omega_i$ are $L$-invariant and are holomorphic when 
restricted to $X \setminus S$. Since $L$ is abelian the classical Lie-Cartan formula implies 
that the forms $\omega_i$ are closed (on the open dense orbit $X \setminus S$ and hence on all of 
$X$). The indefinite integral of any nonzero closed meromorphic 1-form is a nonconstant 
meromorphic function on some covering space of the complement of the simple poles of the 
1-form. Since $X$ is simply connected and does not admit nontrivial meromorphic functions, 
every nonzero meromorphic 1-form on $X$ must have a simple pole on some component of the 
complement $S$ of the open dense orbit of $L$.

Define $$\Delta \,=\, H_1(X \setminus S,\, \mathbb Z)\,=\,H_1(L, \,\mathbb Z)\, .$$ Then 
$\Delta$ identifies with a discrete subgroup in the Lie algebra $\mathfrak{a}$ of $L$ and $L$ 
is isomorphic to the quotient $\mathfrak{a} / \Delta$.

The restriction to $X \setminus S$ of the $L$-invariant meromorphic $1$-forms $\omega_i$ 
identify with a basis of $\mathfrak{a}^*$.

Pair any \(\gamma \,\in\, \Delta,\, \omega \,\in\, \mathfrak{a}^*\) as follows:
\begin{equation}\label{pair}
\gamma,\, \omega\, \longmapsto\, \int_{\gamma} \omega \,\in\, \mathbb C{}\, .
\end{equation}
Assume that for a given \(\omega\) this pairing vanishes for every \(\gamma\). Then \(\omega\) 
integrates around each component of \(S\) to define a meromorphic function on $X$. This 
meromorphic function must be constant and therefore we have \(\omega\,=\,0\). Consequently, the pairing
in \eqref{pair} is nondegenerate, and hence $\Delta \,\subset\, \mathfrak{a}$ spans $\mathfrak{a}$ over 
$\mathbb C$. This implies that $L\, =\, \mathfrak{a} / \Delta$ is covered by $({\mathbb C}^*)^{n}$. 
\end{proof}

Recall that a large and important class of non-K\"ahler smooth equivariant compactifications of complex abelian Lie groups 
(namely the so-called LMBV manifolds) was constructed in \cite {BM,Mer} (see also \cite{PUV}).

\section{Equivariant compactifications of abelian groups and stability}

In this section we prove the main result of the article (Theorem \ref{main thm}).

Let us first recall the notion of slope stability for holomorphic
vector bundles complex manifolds which will be used in the sequel.

Let $X$ be a compact complex manifold of complex dimension $n$. Let us fix a Gauduchon metric $g$ on it. Recall that the 
$(1,1)$-form $\alpha_g$ associated to a Gauduchon metric $g$ satisfies the equation
$$\partial \overline\partial \alpha_g^{n-1}\,=\,0\, .$$ By a result of Gauduchon, any hermitian metric on $X$ is conformally
equivalent to a Gauduchon metric (satisfying 
the above equation) which is uniquely defined up to a positive constant \cite{Ga}.

The degree of a torsionfree coherent analytic sheaf $F$ on $X$ is defined to be
\begin{equation}\label{deg}
\text{deg}(F)\,:=\, 
\frac{\sqrt{-1}}{2\pi}\int_M K(\det F)\wedge \alpha_g^{n-1}\, \in\, {\mathbb R}\, ,
\end{equation}
where $\det F$ is the determinant line 
bundle for $F$ \cite[Ch.~V, \S~6]{Ko} (or Definition 1.34 in \cite{Br}) and $K$ is the curvature for a hermitian connection on 
$\det F$ compatible with $\overline \partial_{\det F}$. This degree is independent of the hermitian metric since any two such 
curvature forms differ by a $\partial \overline\partial$-exact $2$--form on $X$:
$$
\int_X (\partial \overline\partial u)\wedge \alpha_g^{n-1}\,=\, -\int_X u \wedge\partial \overline\partial \alpha_g^{n-1}\,=\, 0\, .
$$

This notion of degree is independent of the choice of the 
hermitian connection, but depends on the Gauduchon metric (it is not a topological invariant). In the particular case where $d \alpha_g =0$ we find the classical degree for K\"ahler manifolds (and in this case the degree is a topological invariant).

Define $$\mu(F)\,:=\,
\frac{\text{deg}(F)}{\text{rank}(F)}\, \in\, {\mathbb R}\, ,$$ which is called the
{\it slope} of $F$ (with respect to $g$).

A torsionfree coherent analytic sheaf $F$ on $X$ is called \textit{stable} (respectively,
\textit{semi-stable}) if for every coherent analytic subsheaf $W \,\subset\, F$
such the $\text{rank}(W)\, \in\, [1\, ,\text{rank}(F)-1]$, the inequality
$\mu(W) \, <\, \mu(F)$ (respectively, $\mu(W) \, \leq\, \mu(F)$)
holds (see \cite[p.~44, Definition 1.4.3]{LT}, \cite[Ch.~V, \S~7]{Ko}).

The rest of the section is devoted to the proof of the main theorem of the article:

\begin{theorem}\label{main thm}
Compact complex manifolds with algebraic dimension zero bearing a holomorphic Cartan 
geometry of algebraic type have infinite fundamental group.
\end{theorem}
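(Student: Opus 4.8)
The plan is to argue by contradiction: suppose $X$ is a compact complex manifold of algebraic dimension zero carrying a holomorphic Cartan geometry of algebraic type and having \emph{finite} fundamental group. Passing to the universal cover $\widetilde{X}$ (a finite unramified cover, hence again compact, of algebraic dimension zero, and now simply connected) and pulling back the Cartan geometry, I may assume $X$ is simply connected. Replacing the geometry by its induced effective geometry $(\overline{P},\,\overline{\omega})$ does not change the underlying manifold and preserves the algebraic type hypothesis, so I may also assume the geometry is effective. At this point Lemma~\ref{symetry} applies directly: the identity component $L$ of the automorphism group is a connected complex abelian Lie group, covered by $(\mathbb{C}^*)^n$, acting on $X$ with an open dense orbit whose complement is an anticanonical divisor $S$, and $X$ is an $L$-equivariant compactification of $L$.

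The core of the argument is then to derive a contradiction from the \emph{existence} of such an equivariant compactification on a manifold of algebraic dimension zero, using slope stability. First I would study the holomorphic tangent bundle $TX$. Because $L\cong \mathfrak{a}/\Delta$ is covered by $(\mathbb{C}^*)^n$, its translation-invariant vector fields trivialize $T(X\setminus S)$, giving $n$ commuting global holomorphic vector fields $X_1,\dots,X_n$ that span $TX$ away from $S$ and whose wedge $X_1\wedge\cdots\wedge X_n$ is a holomorphic section of the anticanonical bundle $K_X^{-1}=\det TX$ vanishing exactly on $S$. I would next fix a Gauduchon metric $g$ and compute $\deg(TX)=\deg(K_X^{-1})$ using this section: since $S$ is a nonempty effective anticanonical divisor (the open orbit is a proper subset as $X$ is compact and $L$ is noncompact), the degree of $K_X^{-1}$ is strictly positive, so $\mu(TX)=\deg(TX)/n>0$.

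The decisive step is to contradict this positivity by exhibiting a destabilizing structure that forces $\mu(TX)\le 0$. The idea is that the flat Ehresmann connection machinery of Section~\ref{section: geometric structures}, together with the algebraic-type hypothesis, constrains the Harder–Narasimhan and socle filtrations of $\mathrm{ad}(P_G)$ and hence of $TX$ in a way incompatible with algebraic dimension zero. Concretely, I expect that on a manifold of algebraic dimension zero every holomorphic section of a bundle built from $P_G$ is tightly controlled, and the interplay between the curvature $\mathrm{Curv}(\widetilde{\omega})\in H^0(X,\,\mathrm{ad}(P_G)\otimes\Omega_X^2)$ and semistability forces a filtration of $TX$ by subsheaves whose slopes cannot accommodate $\mu(TX)>0$. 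The point is that the abelian group action produces invariant sections (the forms $\omega_i$ dual to the $X_i$) that would descend to meromorphic functions or to subsheaves of nonpositive slope, and on an algebraic-dimension-zero manifold the only such invariants are the trivial ones, pinning the degree down to a value contradicting the strictly positive anticanonical degree computed above.

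The hard part will be this last step: making precise the stability argument that turns the $L$-invariant differential-form data into a genuine numerical contradiction on $\mu(TX)$. In particular, one must carefully control how the semistability of the bundles associated to $P_G$ interacts with the non-Kähler Gauduchon degree (which is not topological), and verify that the destabilizing subsheaf arising from the invariant forms really violates the semistability inequality $\mu(W)\le\mu(TX)$ rather than being consistent with it. I would expect to need the effectiveness and algebraic-type hypotheses precisely here, to rule out the parallelizable-type examples (such as the $\mathrm{SL}(2,\mathbb{C})$ quotients of Ghys) where no contradiction should arise because those manifolds do have infinite fundamental group.
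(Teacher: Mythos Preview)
Your reduction (pass to the universal cover, replace by the effective geometry, apply Lemma~\ref{symetry}, and observe $\deg(TX)>0$ via the anticanonical section $X_1\wedge\cdots\wedge X_n$) matches the paper exactly. The gap is in your ``decisive step'': you are trying to contradict $\mu(TX)>0$ by producing a destabilizing subsheaf forcing $\mu(TX)\le 0$. No such contradiction exists, and the paper does not attempt one. The positivity of $TX$ is not the thing to be contradicted; it is the thing to be \emph{used}.

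Here is what actually happens. The holomorphic connection $\widetilde{\omega}$ on $P_G$ induces a holomorphic connection on $\mathrm{ad}(P_G)$, and one shows (this is the real content of the stability argument) that any holomorphic vector bundle on $X$ with a holomorphic connection is semistable of degree zero. Combining this with a sharpening of your observation --- not just $\deg(TX)>0$, but $\deg(TX/F)>0$ for every saturated proper subsheaf $F\subset TX$ --- one gets $H^0(X,\mathrm{Hom}(TX,\mathrm{ad}(P_G)))=0$: the image of any nonzero homomorphism would be a positive-degree subsheaf of a semistable degree-zero bundle. Now contract the curvature $\mathrm{Curv}(\widetilde{\omega})\in H^0(X,\mathrm{ad}(P_G)\otimes\Omega^2_X)$ with each $X_i$ to obtain maps $TX\to\mathrm{ad}(P_G)$; these must all vanish, and since the $X_i$ span $TX$ generically, the curvature is identically zero. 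So the Cartan geometry is \emph{flat}. Since $X$ is simply connected, the developing map identifies $X$ with the model $G/H$; the algebraic-type hypothesis is used precisely here, to ensure that $G/H$ carries nonconstant meromorphic functions, contradicting algebraic dimension zero. Your proposal never reaches flatness or the developing map, and misplaces the role of the algebraic-type hypothesis.
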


\begin{proof}
Assume, by contradiction, that a complex manifold $X$ of algebraic dimension zero admits a holomorphic 
Cartan geometry $(P,\, \omega)$ and has finite fundamental group. Considering the universal cover of $X$ and pulling back
the Cartan geometry $(P,\, \omega)$ to it, we would assume that $X$ is simply connected.
Replace the Cartan geometry by the induced effective Cartan geometry, so we can assume that it is effective.

By Lemma \ref{symetry}, the maximal connected subgroup of the automorphism group of
$(P,\, \omega)$ is a complex connected 
abelian Lie group $L$ acting on $X$ with an open dense orbit (it is the complement of an 
anticanonical divisor on $X$). The complement of this anticanonical divisor is biholomorphic to $L$ 
(see Lemma \ref{symetry} again).

Let us now prove the following lemma describing the geometry of simply connected smooth equivariant compactifications of complex abelian Lie groups.

\begin{lemma}\label{main lemma}
Let $X$ be a simply connected smooth equivariant compactification of a complex abelian Lie group $L$. Then the following
four hold:
\begin{enumerate}
\item There is no nontrivial holomorphic $k$-form on $X$, for $k \,\geq\, 1$.

\item Any holomorphic line bundle $E_1$ over $X$ admitting a holomorphic connection is holomorphically trivial.

\item If $E$ is a holomorphic vector bundle over $X$ admitting a holomorphic connection, then $E$ is semi-stable of degree zero.

\item If $F$ is a coherent analytic subsheaf of the holomorphic tangent bundle $TX$, such that $TX/ F$ is nonzero and torsionfree, then 
the quotient ${\rm deg}(TX/ F) \,>\,0$.
\end{enumerate}
\end{lemma}

\begin{proof}[{Proof of Lemma \ref{main lemma}}] 
(1) Let us first consider the special case of $k\,=\,1$. Let $\eta$ be a holomorphic one-form on $X$. Let $\{X_1,\, \cdots,\, X_n \}$ be a family of commuting holomorphic vector fields on $X$ which span
$TX$ at the generic point. By the Lie-Cartan formula we get that for any $i,\,j \,\in\, \{1,\, \cdots,\, n\}$:
\begin{equation}\label{dc}
d \eta (X_i,\, X_j) \,=\, X_i \cdot \eta (X_j) - X_j \cdot \eta (X_i)- \eta ( \lbrack X_i, X_j \rbrack )\,=\,0\, .
\end{equation}
Since any holomorphic function on $X$ is a constant one and $[X_i,\, X_j]\,=\, 0$ for all $i,\, j$, it follows from
\eqref{dc} that the form
$\eta$ is closed. Now since $X$ is simply connected, $\eta$ coincides with the differential of a global holomorphic
function $u$ on $X$. Since $X$ is compact, $u$ is constant and hence $\eta \,=\,0$.

Now consider a $k$-form $\eta$ on $X$ with $k \,>\,1$. For any $i_1,\, i_2,\, \cdots ,\,i_{k-1} \,\in\, \{1,\,
\cdots,\, n\}$, the one-form on $X$
$$v \, \longmapsto\, \eta(X{_{i_1}},\, \cdots,\, X_{i_{k-1}},\, v)$$ vanishes by the previous proof. Consequently, $\eta$ must vanish.

(2) Let $\nabla$ be a holomorphic connection on a holomorphic line bundle $E_1$. The curvature ${\rm Curv}(\nabla)$ of
$\nabla$ is a holomorphic two-form on $X$. Hence ${\rm Curv}(\nabla)\,=\, 0$ by (1). So $\nabla$ is flat. This implies
that $E_1$ is the trivial holomorphic line bundle, because $X$ is simply connected.

(3) Let $\nabla$ be a holomorphic connection of $E$. The determinant line bundle ${\det}(E)$ of $E$
has a holomorphic connection induced by $\nabla$. So ${\det}(E)$ is holomorphically trivial by (2).
Hence the degree of $E$ is zero (see \eqref{deg}).

Assume, by contradiction, that $E$ is not semi-stable. Then there exists a maximal semi-stable coherent
analytic subsheaf $W$ (the 
smallest nonzero term of the Harder--Narasimhan filtration of $E$) \cite[pp.~14--15, Theorem 1.3.1]{HL}. The maximal
semistable subsheaf $W$ has the following property:
\begin{equation}\label{qp}
H^0(X,\, \text{Hom}(W,\, E/W))\, =\, 0\, .
\end{equation}
Indeed, this follows immediately from the facts that the slopes of the graded pieces for the Harder--Narasimhan filtration of
$E/W$ are strictly less than $\mu(W)$, and there is no nonzero homomorphism from a semistable sheaf to another semistable sheaf
of strictly smaller slope.

Now consider the second fundamental form $$S\,:\, W
\,\longrightarrow\, (E/W)\otimes \Omega^1_X$$
of $W$ for the holomorphic connection $\nabla$, which associates to any locally defined holomorphic section $s$ of $W$ the
projection, to $(E/W)\otimes \Omega^1_X$, of $\nabla (s)$ (which is a section of $E\otimes \Omega^1_X$). 
For any global holomorphic vector field $X_i$ on $X$ we have the homomorphism
$W\, \longrightarrow \, E/W$ defined by $w\, \longmapsto\,S(w)(X_i)$. From \eqref{qp} we know that this homomorphism vanishes
identically. Since the vector fields $X_i$ span $TX$ at the generic point, we have $S\,=\, 0$. Therefore, the connection
$\nabla$ preserves $W$. In other words, $\nabla$ induces a holomorphic connection on $W$.
This implies that the coherent analytic sheaf $W$ is locally free (see Lemma 4.5 in \cite{BD}). 
Since $W$ admits a holomorphic
connection, from (2) we have ${\rm deg}(W)\,=\, 0$. This contradicts the assumption that $W$ destabilizes $E$.

Consequently, $E$ is semistable.

(4) Take $F\, \subset\, TX$ as in (4). Let $TX/F$ be of rank $r \geq 1$. Consider the projection
$q\, :\, TX \,\longrightarrow\, TX /F$ and the associated homomorphism
\begin{equation}\label{de}
\widehat{q}\, :\, \bigwedge\nolimits^r TX \,\longrightarrow\, {\det}(TX/F)\, .
\end{equation}
Note that $q$ produces a homomorphism
$\bigwedge^r TX \,\longrightarrow\, {\det}(TX/F)$ over the subset $U\, \subset\, X$ where the torsionfree coherent analytic
sheaf $TX /F$ is locally free; now this homomorphism extends to entire $X$ by Hartogs' theorem, because ${\det}(TX/F)$
is locally free and the complex codimension of $X\setminus U$ is at least two.

Choose $i_1, \,\cdots,\, i_r \,\in\, \{ 1,\, 2,\, \cdots,\, n\}$ for which $\{ q(X_{i_1}), \,\cdots,\,q(X_{i_r})\}$ generate
$TX /F$ at the generic point. Then $q(X_{i_1}) \bigwedge \cdots \bigwedge q(X_{i_r})$ defines a holomorphic 
section of ${\det}(TX/F)$ which is not identically zero; we shall denote this
section of ${\det}(TX/F)$ by $\sigma$. So we have ${\rm deg}(TX/F)\, \geq\, 0$, because ${\det}(TX/F)$ admits a
nonzero holomorphic section namely $\sigma$.

Assume by contradiction ${\rm deg}(TX/F) \,=\, 0$. Then the divisor ${\rm div}(\sigma)\, \subset\, X$ of the
above section $\sigma$ 
is the zero divisor. Indeed, the degree of $TX/F$ is the volume of ${\rm div}(\sigma)$ with respect to the given
Gauduchon metric (see \cite[Proposition 5.23]{Br}). Consequently, the line bundle ${\det}(TX/F)$ is holomorphically trivializable.
Once a holomorphic
trivialization of ${\det}(TX/F)$ is fixed, the homomorphism $\widehat{q}$ in \eqref{de} becomes a holomorphic
$1$-form on $X$ which is not identically zero. This is in contradiction with (1).
This proves that ${\rm deg}(TX/ F) \,>\,0$.
\end{proof}

Continuing with the proof of Theorem \ref{main thm}, let us now consider the curvature of the Cartan geometry $(P,\, \omega)$
$$
\text{Curv}(\widetilde{\omega})\, \in\, H^0(X,\, \text{ad}(P_G)\otimes \Omega^2_X)
$$
constructed in \eqref{curv}. When contracted with any global holomorphic vector field $X_i$ on $X$,
this curvature form $\text{Curv}(\widetilde{\omega})$ produces a holomorphic
homomorphism
\begin{equation}\label{vp}
\varphi(X_i) \, :\, TX \, \longrightarrow\, \text{ad}(P_G)\, ,\ \ v\, \longmapsto\, \text{Curv}(\widetilde{\omega})(X_i,\, v)\, .
\end{equation}
The holomorphic connection $\widetilde{\omega}$ on $P_G$ induces a holomorphic connection on the vector bundle
$\text{ad}{(P_G)}$ associated to the principal $G$--bundle $P_G$. Hence from Lemma \ref{main lemma}(3) we know that
the vector bundle $\text{ad}{(P_G)}$ is semistable of degree zero.
Consequently, from Lemma \ref{main lemma}(3) it follows immediately that
$$
H^0(X,\, \text{Hom}(TX,\, \text{ad}(P_G))\,=\, 0\, ;
$$
indeed, the image of a nonzero homomorphism $TX \, \longrightarrow\, \text{ad}(P_G)$ would contradict the semistability condition
for $\text{ad}(P_G)$. In particular, the homomorphism $\varphi(X_i)$ in \eqref{vp} vanishes identically.

Since the vector fields $X_i$ generate the tangent bundle of $X$ over a nonempty open subset of $X$, from the vanishing of
the homomorphisms $\varphi(X_i)$ we conclude that $\text{Curv}(\widetilde{\omega})\,=\,0$. Consequently,
the Cartan geometry $(P,\, \omega)$ is flat.

Since $X$ is simply connected, the developing map of the Cartan geometry is a holomorphic biholomorphism between $X$ and
$G/H$. But $G/H$ is algebraic and $X$ has algebraic dimension zero: a contradiction. This completes
the proof of Theorem \ref{main thm}.
\end{proof} 

Recall that a {\it generalized Cartan geometry} in the sense of \cite{BD} is given by a pair $(P,\, \omega)$ satisfying 
conditions (2) and (3) in Definition \ref{cartan geom def}, while the homomorphism in Definition \ref{cartan geom def}(1) is 
not required to be an isomorphism anymore. Consequently, $X$ and $G/H$ do not necessarily have the same dimension.
 
The proof of Theorem \ref{main thm} shows that on simply connected smooth equivariant compactifications $X$ of complex abelian 
Lie groups, all (generalized) holomorphic Cartan geometries in the sense of \cite{BD} are flat (notice that the condition on the 
algebraic dimension of $X$ is not needed for this part of the proof; neither the condition on the algebraicity of $G/H$). In 
conclusion all (generalized) holomorphic Cartan geometries on $X$ are given by holomorphic maps $X \,\longrightarrow\, G/H$.

\section*{Acknowledgements}

The authors are very grateful to Laurent Meersseman for many enlightening discussions on the subject. 
We thank the referee for helpful comments.
Indranil Biswas is partially supported by a J. C. Bose Fellowship. Sorin Dumitrescu wishes to 
thank T.I.F.R. Mumbai for hospitality. This research was supported in part by the International 
Centre for Theoretical Sciences (ICTS) during a visit for participating in the program - Analytic 
and Algebraic Geometry (Code: ICTS/aag2018/03).


\vspace{0.5cm}

\bibliographymark{References}

\providecommand{\bysame}{\leavevmode\hbox to3em{\hrulefill}\thinspace}
\providecommand{\arXiv}[2][]{\href{https://arxiv.org/abs/#2}{arXiv:#1#2}}
\providecommand{\MR}{\relax\ifhmode\unskip\space\fi MR }
\providecommand{\MRhref}[2]{%
  \href{http://www.ams.org/mathscinet-getitem?mr=#1}{#2}
}
\providecommand{\href}[2]{#2}

\end{document}